\documentclass[10pt]{article}

\usepackage{amssymb}
\usepackage{amsfonts}
\usepackage{graphicx}
\usepackage{amsmath}

\usepackage{appendix}

\usepackage[colorlinks, linkcolor=black, anchorcolor=black,citecolor=black]{hyperref}

\usepackage{mathrsfs}
\usepackage{frcursive}

\usepackage{geometry}
\usepackage{color}

\numberwithin{equation}{section}

\newcommand{\R}{\mathbb{R}}

\newtheorem{theorem}{Theorem}[section]

\newtheorem{assumption}[theorem]{Assumption}

\newtheorem{corollary}[theorem]{Corollary}

\newtheorem{definition}[theorem]{Definition}
\newtheorem{example}[theorem]{Example}

\newtheorem{lemma}[theorem]{Lemma}

\newtheorem{remark}[theorem]{Remark}

\newenvironment{proof}[1][Proof]{\noindent\textit{#1.} }{\hfill \rule{0.5em}{0.5em}}

\begin{document}


\title{Spreading speeds for  prey--predator systems in a shifting environment: a short proof}



\author{ Zhucheng Jin\footnote{Email: jinzc@ustc.edu.cn} \\
 School of Mathematical Sciences, University of Science and Technology of China, \\
  Hefei, 230026, People's Republic of China \\
}

\maketitle





\begin{abstract}
This note  is concerned with  the spreading speed of the predator component in  a  class  of prey--predator   reaction--diffusion systems  with  spatiotemporal heterogeneity  depending on  a moving variable.  The main difficulty is that the full system lacks  a direct comparison principle. 
We establish a pointwise estimate showing that the prey density is close to its carrying capacity   wherever the predator  density is sufficiently small.  This allows us to compare solutions of  the system with  those of  scalar  Fisher--KPP equations in shifting environments.  Consequently, we obtain an   explicit formula for the  spreading speed of the predator.   In particular, for nonmonotone shifting environments, locking and nonlocal pulling phenomena may occur.

\vspace{0.1in}
\noindent \textbf{Key words}.  spreading speeds; shifting environments; prey-predator systems

	\vspace{0.1in}\noindent \textbf{2020 Mathematical Subject Classification}. 35K45, 35K57, 92D25
\end{abstract}

\section{Introduction}
In this paper, we investigate   spreading speeds for the following general reaction-diffusion systems  in a shifting environment
\begin{equation}\label{Pb}
\begin{cases}
\partial_t u= d \partial_{xx} u  +  u f(x-c_1t, u,v),  & t>0, x\in \R,\\
\partial_t v= \; \partial_{xx} v + vg( x-c_1 t,  u,  v ),  & t>0, x\in \R, \\
u(0, x)=u_0(x), \;\; v(0, x)= v_0(x), & x\in \R,
\end{cases}
\end{equation}
where $c_1>0$ is the shifting speed of  the  environment and $d>0$ is the diffusion rate of the prey.   
A typical example  of \eqref{Pb}   is a Lotka--Volterra  prey--predator system.    
For notational simplicity, we normalize the diffusion coefficient of the predator to be one. Indeed, a system with diffusion terms $d_1\partial_{xx} u$ and $d_2 \partial_{xx}v$, where $d_1,d_2>0$, can be reduced to the present form by a standard  time rescaling argument.
Our main goal is to determine the spreading speed of the predator species when the prey  is initially uniformly established.  The detailed assumptions  of $f$ and $g$ are given in the  next section.


The concept of the  spreading speed was  introduced by Aronson and Weinberger \cite{aronson1978multidimensional}  in the context of  a scalar reaction--diffusion equation  to  describe the  spatial expansion of a solution front  over time.   The celebrated example is the   Fisher--KPP  equation in a homogeneous environment,
\begin{equation*}
\partial_{t} u=  \partial_{xx} u + u(r-u),    \quad \text{ for }   t>0, x\in \R.
\end{equation*} 
For the solution $u$ of this  Fisher--KPP  equation  with  compactly supported initial data, it follows from \cite{aronson1978multidimensional}  that there exists a constant $c_*:= 2\sqrt{r}$ such that 
\begin{equation}\label{eq-kpp}
\lim_{t\to +\infty } \sup_{ x \geq ct} u(t,x) =0,  \;\; \forall c>c_* \;\; \text{ and } \;\; \liminf_{t\to +\infty } \inf_{ 0\leq x \leq ct} u(t,x)> 0,  \;\; \forall c\in (0,c_*).  
\end{equation}
The constant $c_*$ is called  the spreading speed.   In \eqref{eq-kpp}, the speed $c_*$   is linearly determined. 
Since then, the  spreading speed of various reaction-diffusion equations/systems and their  generalizations in heterogeneous environments have been extensively studied, see for example  \cite{berestycki2019asymptotic, ducrot2019spreading,  girardin2019invasion} and the  references  therein.     

Shifting environments arise naturally in population dynamics when the spatial distribution of favorable  habitats changes over time, for instance due to  climate warming or other large-scale environmental changes.  Such models   were  introduced  in   \cite{potapov2004shift}  and \cite{berestycki2009shift}.  
 To model such effects, a simple way is to  assume that  the growth or interaction rates depend on the moving variable $x-c_1t$,  where $c_1$ is the velocity of the  moving habitat.   Beyond  biological modeling, the shifting  heterogeneity can also  arise  in a triangular system that  is partially decoupled, see \cite{fang-lou-wu2016,holzer2014accelerated} and the discussion in Section 2.1 of  \cite{GGM2024}.   The  propagation dynamics of equations/systems in shifting environments have been extensively  studied in   \cite{choi2021persistence, fang-lou-wu2016, GGM2024,  LamLee2024pp, LNY2024, LY2022, Libingtuan2014}.  See also \cite{Yi-Zhao2020jfa, Yi-Zhao2026}   for the general theory  on these problems.    We refer the reader to the  survey article \cite{wang2022shiftSurvey} and the references therein.  
 Unlike  homogeneous media,  the spreading speed may  depend on the velocity and the profile of the moving habitat.  As shown in  \cite{berestyckiFang2018,  LY2022} and \cite[Chapter 6]{LamLou2022},   some intriguing  phenomena may occur, for instance, locking and nonlocal pulling phenomena, where the spreading speed is not linearly determined.  
The  locking phenomenon means that the spreading speed of the species equals    the shifting speed, while the nonlocal pulling  phenomenon  means that the spreading speed  is not linearly determined by the leading edge, which  can be enhanced by the presence of a good habitat at large spatial scales.

More  recently, \cite{GGM2024} and \cite{LNY2024} give a complete characterization of the spreading speed for scalar KPP equations in shifting environments with  nonmonotone growth rate functions. 
The nonmonotone shifting environments  are significant  in biological modeling, since many biological processes depend  unimodally on  temperature \cite{deutsch2008impacts}.    The lack of  monotonicity  in the shifting environments introduces additional difficulties in characterizing the propagation dynamics.

For prey-predator systems, the analysis becomes even more delicate. The full system generally lacks a  comparison principle, and the growth rate of the predator depends on the prey density, which is affected by predation.    Hence, the methodology for  scalar equations  cannot be applied directly.   For  prey-predator systems in homogeneous environments, the spreading speed was studied in \cite{ducrot2021asymptotic, ducrot2019spreading}. Recently  the propagation dynamics of  prey-predator systems in shifting or nonautonomous  environments were investigated by \cite{choi2021persistence, LamLee2024pp, ducrotjin2023systemspreading}.      

The main purpose of this note is to  provide a  reduction from the prey-predator system to an associated scalar Fisher--KPP equation in a shifting environment, where the  environmental profile may be nonmonotone.   The local favorable patches may affect the selection of  speed.   This reduction allows us to  characterize how a nonmonotone shifting environment affects the predator invasion speed. This reduction is obtained through  a pointwise estimate showing that the prey density is  close to its carrying capacity wherever the predator density is small, see Lemma \ref{Lem-estimate} below.    

The pointwise estimate was introduced in  \cite{ducrotjin2023systemspreading} for time heterogeneous prey-predator systems.   In contrast,  the present problem contains a moving spatial heterogeneity, and the speed selection is governed by the interaction between the predator front and the shifting environmental profile.   We prove   that the predator component has exactly the same spreading speed as the scalar KPP equation with effective growth rate   $g(x-c_1t, 1, 0)$. Here, we emphasize that $g(x-c_1t, 1,0)$ can be nonmonotone. 
The locking and nonlocal pulling mechanisms discovered for scalar KPP equations in shifting environments persist at the level of predator invasion in a non-cooperative  prey-predator system.
Our main result  also confirms  the conjecture about  the locking case stated in \cite[Remark 3]{LamLee2024pp}, where a  Hamilton--Jacobi approach was used  to  study  a Lotka--Volterra prey-predator model with a monotone conversion rate function.

Finally,  we mention the work of Bramson \cite{bramson1978maximal},  which employs  probabilistic  arguments  to establish   the logarithmic delay in  the position of  solutions to  Fisher--KPP equations relative to the position of the traveling front with minimal wave  speed.  For prey-predator systems,    characterizing  such a   correction  remains  a challenging open problem. 
 The argument used  in this note does not seem applicable to addressing  this issue.

The rest of  this paper is organized as follows. In Section 2, we present the assumptions and state our main results.   In Section 3, we recall the spreading speed results of KPP equations in shifting environments and prove  some basic results. In Section 4, we prove  the spreading speed  theorem.  In 
Section 5, we prove the asymptotic behaviors of our problem and display a concrete example. 

$\vspace{4em}$

\section{Assumptions and main results}

\subsection{General assumptions}
Now, we  state the  assumptions on $f$ and $g$.
\begin{assumption}\label{Ass-fg}
Assume that $f: \R \times [0, \infty)\times  [0, \infty) \to \R$ and $g: \R \times [0, \infty)\times  [0, \infty) \to \R$   satisfy: 
\begin{enumerate}
\item[(i)]  For every $R>0$,  $z \mapsto f(z,u,v)$ and $z \mapsto g(z,u,v )$ are bounded and uniformly continuous on $\R$, uniformly for $(u,v) \in [0, R]^2$.  Moreover,   $f$ and  $g $ are Lipschitz continuous in $(u,v)\in [0,R]^2$,  uniformly  with respect to $z\in \R$;
\item [(ii)]  (\textbf{Predation type}) For each $u>0$, $z\in \R$, the  map  $v\mapsto f(z,u, v)$ is decreasing. For each $v>0$,  $z\in \R$, the map $u\mapsto g(z,u,v)$  is increasing.
\item[(iii)] (\textbf{Monostable}) $f(\cdot, 1,0) \equiv 0$ and $\inf_{z\in \R} f(z,u, 0)> 0$ for all $u\in [0, 1)$. 
\item [(iv)] (\textbf{Strong KPP}) For each $u\geq 0$,  the map  $v \mapsto g(z, u, v)$ is nonincreasing uniformly in $z\in \R$.  
\item [(v)] The limits $\lim_{z\to \pm \infty} g(z, 1,0 )$ exist  and 
$$\inf_{z\in \R} g(z, 1, 0) >0. $$
 Denote
\begin{equation}\label{rrr}
r^{\pm} := g(\pm \infty , 1, 0).
\end{equation}

\end{enumerate}
\end{assumption}
\begin{remark}
\rm 
We have  $r^\pm >0$.  In this  note,  it is not necessary to  require $z\mapsto g(z, 1,0)$ to be monotone. We are interested in the case 
$$\sup_{z\in\R} g(z,1,0) > \max\{ r^+, r^- \}. $$
\end{remark}

\vspace{1em}

Next  we impose the  dissipative assumption. 
\begin{assumption} \label{Ass-bound}
Assume that there exist constants $\overline{V}>0$  and $\delta>0$ such that
$$  \sup_{z\in\R} g(z,1,\overline{V})\leq 0   \quad
\text{ and  } \quad
\inf_{\substack{z\in \R, \; 0\leq u\leq \delta}} f(z,u, \overline {V})> 0.
$$
\end{assumption}

\begin{assumption}\label{Ass-IC}
Let  $\overline{V}$ be chosen  as in Assumption \ref{Ass-bound}.  Let $u_0$ and $v_0$ be two bounded and continuous functions satisfying
$$
0<\inf_{x\in\R} u_0(x)\le u_0(x) \le 1,
\quad
0\le v_0(x)\le \overline{V},\quad
v_0\not\equiv 0,  \quad \text{ for all }   x\in \R. 
$$
and suppose that $v_0$ is compactly supported. 
\end{assumption}

\begin{remark}
\rm 
Assumption \ref{Ass-bound}  provides an admissible upper bound for the predator population and guarantees the persistence of the prey component at low density. Together with the monotonicity assumptions, it yields the invariant region stated in Lemma \ref{Lem-bound}    below.
\end{remark}

\subsection{Main results}
We consider only  rightward propagation, since leftward propagation  can be treated by    the change of  variable  $x\mapsto -x$.  We introduce the definition of rightward spreading speed, which will   be referred to   simply as  the spreading speed.
\begin{definition}\label{Def-speed}
Let $(u, v)$ be a  solution of \eqref{Pb}   with  initial data $(u_0, v_0)$ satisfying Assumption \ref{Ass-IC}. The quantity $c^*>0$ is called the \textit{rightward spreading speed} of $v$ if 
\begin{align*}
\lim_{t\to +\infty} \sup_{x\geq ct} v(t,x) =0,  \qquad & \forall c>c^*, \\
\liminf_{t\to +\infty} \inf_{ 0 \leq x\leq ct} v(t,x)>0,  \qquad & \forall c\in (0, c^*).
\end{align*}
\end{definition}

To state our main theorem, we first introduce an eigenvalue problem, 
\begin{equation}\label{eigen-eq}
\partial_{zz} \phi + g(z, 1,0 ) \phi = \Lambda \phi,  \quad \forall z\in \R.
\end{equation}
Let us define the generalized  principal  eigenvalue $\Lambda_1$ of \eqref{eigen-eq} as follows: 
\begin{equation}\label{ev}
\Lambda_1 := \Lambda_1(g)= \inf\left\{ \Lambda \in \R: \;\;  \exists \psi \in C^2_{loc} (\R), \psi>0,  \partial_{zz} \psi + g(z, 1,0) \psi \leq \Lambda \psi \text{ in } \R  \right\}.
\end{equation}
Several notions of generalized principal eigenvalues are analyzed in \cite{berestyckiRossi2015generalizations}.
We refer  to \cite{berestyckiRossi2015generalizations}, \cite[Proposition 3.1]{GGM2024} and  \cite[Proposition 4.2]{LNY2024} for some basic properties of $\Lambda_1$.

Now we  state our main theorem. 
\begin{theorem}\label{Thm1}
Let Assumptions \ref{Ass-fg}, \ref{Ass-bound} and \ref{Ass-IC} be satisfied and $c_1>0$.  Let $(u,v)$ be the solution of  \eqref{Pb}.   Then the  spreading speed of $v$ is given by 
\begin{equation*}
c_v^*= \begin{cases}
2\sqrt{r^+}, & \text{ if  }  0<c_1 \leq 2 \sqrt{r^+}, \\
c_1,   & \text{ if }  2\sqrt{r^+} \leq c_1 \leq  2 \sqrt{\Lambda_1}, \\
\frac{c_1}{2} - \sqrt{\Lambda_1 - r^{-} }  + \frac{r^{-}}{\frac{c_1}{2} - \sqrt{\Lambda_1 - r^{-} }  },   \quad  & \text{ if  }   2 \sqrt{\Lambda_1}  \leq  c_1 \leq 2 (  \sqrt{r^-} + \sqrt{\Lambda_1 -r^{-}}),\\
2\sqrt{r^-},    & \text{ if  } c_1  \geq   2 (  \sqrt{r^-} + \sqrt{\Lambda_1 -r^{-}}),
\end{cases}
\end{equation*}
 where $r^{\pm}$ and $\Lambda_1$ are defined in \eqref{rrr} and \eqref{ev} respectively.   
\end{theorem}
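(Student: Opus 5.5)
The plan is to sandwich $v$ between solutions of scalar Fisher--KPP equations in a shifting environment. For those equations the spreading speed is known from \cite{GGM2024, LNY2024}. The formula in Theorem \ref{Thm1} is exactly the speed given there for the scalar equation $\partial_t w=\partial_{xx}w+w\,G(x-c_1t)$ with growth profile $G(z)=g(z,1,0)$, generalized principal eigenvalue $\Lambda_1(G)$, and limits $r^\pm$. Moreover, that speed depends continuously on $G$ through $r^\pm$ and $\Lambda_1$. We therefore recall that result for KPP nonlinearities $w\,h(x-c_1t,w)$ and record two further facts. First, by Assumption \ref{Ass-bound} and the monotonicity in Assumption \ref{Ass-fg}(ii), the invariant region $0<u\le 1$, $0\le v\le \overline V$ holds (Lemma \ref{Lem-bound}). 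Second, $u$ stays bounded below by a positive constant, since $f(z,u,\overline V)>0$ for $u\le\delta$.

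\textbf{Upper bound.} Since $u\le 1$ and $u\mapsto g$ is increasing, we have $v_t\le v_{xx}+v\,g(x-c_1t,1,v)$. The comparison principle then gives $v\le \bar w$, where $\bar w$ solves the scalar KPP equation with nonlinearity $w\,g(x-c_1t,1,w)$ and initial datum $v_0$. The strong KPP condition (iv) ensures that this equation is of KPP type with linearization $G$. Hence $\bar w$ spreads at speed $c^*(G)$, which equals the formula in the theorem, and so $\limsup$ of the spreading speed of $v$ is at most $c_v^*$.

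\textbf{Key pointwise estimate (main obstacle).} We show that for every $\varepsilon>0$ there exist $\eta>0$ and $T>0$ such that $u(t,x)\ge 1-\varepsilon$ whenever $t\ge T$ and $v(s,y)\le\eta$ for all $(s,y)$ in a parabolic box $[t-\tau,t]\times[x-L,x+L]$ with $\tau$ and $L$ fixed large. A pure pointwise statement at $(t,x)$ would likewise be reduced to this via a contradiction/compactness argument. The argument runs by contradiction: take sequences $(t_n,x_n)$ with $v(t_n,x_n)\to0$ but $u(t_n,x_n)\le 1-\varepsilon$. Shift the solutions so that $(t_n,x_n)$ becomes the origin, and extract limits using parabolic estimates together with the uniform continuity of $f$ and $g$ in $z$. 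The profile in $z$ is also shifted, so we pass to a limit nonlinearity. In the limit we obtain an entire solution $(u_\infty,v_\infty)$ with $v_\infty(0,0)=0$. By the strong maximum principle $v_\infty\equiv0$ on the past, so $u_\infty$ solves $\partial_t u=d u_{xx}+u f_\infty(\cdot,u,0)$. Since $u_\infty\ge$ a positive lower bound and $\inf f(\cdot,u,0)>0$ for $u<1$, a standard ODE comparison forces $u_\infty\equiv1$. This contradicts $u_\infty(0,0)\le1-\varepsilon$. The delicate point is making the smallness of $v$ propagate backward in time so that the limit indeed has $v_\infty\equiv0$; we handle this with the strong maximum principle applied to the limit, which needs the Harnack-type positivity of $v_\infty$.

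\textbf{Lower bound.} Fix $c<c_v^*$. Choose $\varepsilon$ and $\eta$ so small that the perturbed profile $G_\varepsilon(z)=g(z,1-\varepsilon,\eta)$ still has speed $c^*(G_\varepsilon)>c$; this uses continuity of the scalar speed formula. Define the scalar subsolution problem $w_t=w_{xx}+w\,\min\{g(x-c_1t,1-\varepsilon,\eta),\,K(\eta-w)\}$, whose solutions stay below $\eta$. Wherever $v<\eta$, the pointwise estimate gives $u\ge1-\varepsilon$, and hence $v$ is a supersolution of this equation there. A standard argument by contradiction at the first touching time yields either $v\ge w$ or $v\ge\eta$, where $w$ is started at time $T$ below $v$. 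Thus $v\ge\min\{w,\eta\}$ up to a shift. Then $\liminf_{t\to\infty}\inf_{0\le x\le ct}w>0$ by the scalar result, so $v$ is bounded below on $0\le x\le ct$. Combining this with the upper bound proves Theorem \ref{Thm1}.
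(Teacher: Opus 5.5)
Your proposal follows the paper's overall route. The upper bound is the same comparison with $w\,g(x-c_1t,1,w)$. Your pointwise estimate (compactness, strong maximum principle for $v_\infty$, then a Liouville/ODE argument for $u_\infty$) is Lemma~\ref{Lem-estimate} in an equivalent form: with $u\ge\beta$, the implication ``$v<\eta\Rightarrow u\ge 1-\varepsilon$'' gives $1-u\le\varepsilon+\frac{1-\beta}{\eta}v$, and conversely. The parabolic box and any Harnack inequality are unnecessary. Since $v_\infty\ge0$ and $v_\infty(0,0)=0$, the strong maximum principle already gives $v_\infty\equiv0$ on $(-\infty,0]\times\mathbb{R}$, and that is all the ODE comparison needs.

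The real difference is in the lower bound. You compare with a truncated KPP equation only on the set $\{v<\eta\}$, which requires a threshold-type comparison. That comparison is valid: on $\{w>v\}$ one has $v<w\le\eta$, so $(w-v)^+$ is a subsolution of a linear equation and vanishes at $t=T$. The paper's linear form $1-u\le\alpha+M_\alpha v$ instead yields the global inequality $\partial_t v\ge\partial_{xx}v+v\,(r_\alpha-h_\alpha v)$ for $t\ge T_\alpha$, so the ordinary comparison principle applies with no threshold argument.

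There is one concrete gap. You apply Theorem~\ref{Thm-KPP}, and the continuity of its formula in $(r^\pm,\Lambda_1)$, to the profile $G_\varepsilon(z)=g(z,1-\varepsilon,\eta)$. But Assumption~\ref{Ass-fg}(v) only guarantees that $\lim_{z\to\pm\infty}g(z,1,0)$ exist. The limits of $g(z,1-\varepsilon,\eta)$ need not exist, so $r^\pm_\varepsilon$ is undefined and the scalar theorem does not apply as stated.

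The fix is the paper's. By the uniform Lipschitz bound in Assumption~\ref{Ass-fg}(i), $g(z,1-\varepsilon,\eta)\ge g(z,1,0)-L(\varepsilon+\eta)$. This constant shift of $g(\cdot,1,0)$ has limits $r^\pm-L(\varepsilon+\eta)$ and generalized principal eigenvalue exactly $\Lambda_1-L(\varepsilon+\eta)$. The resulting speed is continuous in these parameters and tends to $c_v^*$.

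You should also take $K\eta\ge\sup_z g(z,1,0)$, so that the linearization of your truncated nonlinearity at $w=0$ really is the profile. Alternatively, use the logistic term $w(\tilde G-Kw)$ with $\tilde G=g(\cdot,1,0)-L(\varepsilon+\eta)$. Its solutions stay below $\eta$, and after the rescaling $W=Kw$ it fits Theorem~\ref{Thm-KPP} directly.
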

\begin{remark}
Note that the map $c_1 \mapsto c_v^*$  is nonmonotone.
When $ c_1 \in  (2\sqrt{r^+},  2\sqrt{\Lambda_1})$, the  predator propagates at  the same velocity as  the moving habitat, namely, the locking phenomena. However, for $ c_1 \in \big( 2 \sqrt{\Lambda_1},  2 (  \sqrt{r^-} + \sqrt{\Lambda_1 -r^{-}}) \big)$,  the nonlocal pulling  phenomenon  occurs,    where  distant  favorable  regions enhance the  spreading  speed.    
\end{remark}

\begin{remark}
\rm
If $\Lambda_1=r^{+} >r^{-}$, then the second  case (locking)  in Theorem \ref{Thm1} is eliminated. This situation has been investigated in \cite{LamLee2024pp}.    In case that $\Lambda_1=r^{-} >r^{+}$, the  third  case (nonlocal pulling) is eliminated.  When $\Lambda_1=r^{-} =r^{+}$, both the second and the third cases are eliminated.
\end{remark}

\subsection{Asymptotic behaviors}
As a consequence of Theorem \ref{Thm1}, 
we have the asymptotic behavior of solutions  at the leading edge. 
 \begin{corollary}\label{Cor1}
 Let the assumptions of Theorem \ref{Thm1} be satisfied.  Then we have
\begin{equation}\label{11}
\lim_{t \to +\infty} \sup_{ x \geq (c_v^*+ \eta)t} \big\{ \left| u(t,x)-1\right| + \left| v(t,x)\right|     \big\} =0,    \forall \eta >0.
\end{equation}
 \end{corollary}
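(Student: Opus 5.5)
The plan is to derive the corollary from the first half of Theorem \ref{Thm1} together with the pointwise estimate Lemma \ref{Lem-estimate}, which says that the prey density is close to its carrying capacity wherever the predator density is small. Since the corollary concerns the leading edge, it splits into two claims that we treat in turn.

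\textbf{Step 1 (the predator vanishes).} Fix $\eta>0$ and set $c=c_v^*+\eta/2>c_v^*$. By Definition \ref{Def-speed} and Theorem \ref{Thm1},
\[
\lim_{t\to+\infty}\sup_{x\ge ct} v(t,x)=0 .
\]
Since $(c_v^*+\eta)t\ge ct$, this gives $\sup_{x\ge (c_v^*+\eta)t}|v(t,x)|\to 0$. The margin $\eta/2$ is kept in reserve for Step 2.

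\textbf{Step 2 (the prey returns to capacity).} We use Lemma \ref{Lem-estimate} in the following form: for every $\varepsilon>0$ there exist $\sigma>0$ and $T>0$ such that $t\ge T$ and $v(t,x)\le\sigma$ imply $u(t,x)\ge 1-\varepsilon$. This is the form the abstract announces. If the lemma is stated instead on moving half-lines $\{x\ge ct\}$ where $v$ is uniformly small, we apply it on $\{x\ge ct\}$ with the same $c$. By Step 1, for $t$ large we have $v\le\sigma$ on $\{x\ge ct\}$, and hence $u\ge 1-\varepsilon$ there.

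\textbf{Upper bound on $u$.} We need $u\le 1$. By the invariant region of Lemma \ref{Lem-bound}, $0\le u\le 1$ and $0\le v\le\overline V$ for all $t\ge0$. This follows from comparison with the ODE: $f(z,1,v)\le f(z,1,0)=0$ for $v\ge0$ by Assumption \ref{Ass-fg}(ii)--(iii), and $u_0\le 1$. Hence $|u-1|\le\varepsilon$ on $\{x\ge (c_v^*+\eta)t\}$ for $t$ large, and since $\varepsilon$ is arbitrary, \eqref{11} follows.

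\textbf{Main obstacle.} The delicate point is the uniformity of Lemma \ref{Lem-estimate}: the threshold $\sigma$ and the time $T$ must not depend on $x$. If the lemma only gives a local-in-space statement, we would prove the needed uniformity by contradiction. Take sequences $t_n\to\infty$ and $x_n\ge (c_v^*+\eta)t_n$ with $u(t_n,x_n)\le 1-\varepsilon$, and shift to $(t_n,x_n)$. Parabolic estimates and the uniform continuity in $z$ from Assumption \ref{Ass-fg}(i) yield a limiting entire solution along a subsequence. On the region $x-x_n\ge -\eta t_n/2$, which grows to all of $\R$ in the limit, we have $v\to0$, so the limit satisfies $\partial_t u_\infty = d\partial_{xx}u_\infty + u_\infty f_\infty(\cdot,u_\infty,0)$. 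Here $f_\infty$ is a limit of shifts of $f$ in $z$, and it keeps $\inf_z f_\infty(z,u,0)>0$ for $u<1$. Moreover $\inf u_\infty>0$, because Assumption \ref{Ass-bound} keeps the prey bounded away from zero and $\inf u_0>0$. For an entire solution of this monostable equation with positive infimum, comparison with the ODE $\dot w = w\,\inf_z f(z,w,0)$ forces $u_\infty\equiv1$. This contradicts $u_\infty(0,0)\le 1-\varepsilon$.
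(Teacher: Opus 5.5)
Your proof is correct, but its main line differs from the paper's.

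The paper does not use Lemma \ref{Lem-estimate} for this corollary. After handling $v$ exactly as in your Step 1, it argues by contradiction with $x_n\ge (c_v^*+\eta)t_n$ and $\limsup_n u(t_n,x_n)<1$. It extracts a limiting entire solution, gets $\tilde v\equiv 0$ from $\tilde v(0,0)=0$ and the strong maximum principle, and concludes $\tilde u\equiv 1$ from the Liouville-type Lemma \ref{Lem-liou}. In effect it re-runs the proof of Lemma \ref{Lem-estimate}, with ``$v$ small'' replaced by ``ahead of the front''.

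You instead use Lemma \ref{Lem-estimate} as a black box, and your hedging about its form is unnecessary. The lemma asserts $1-u\le \alpha+M_\alpha v$ for all $t\ge T_\alpha$ and all $x\in\R$, with constants independent of $x$. Taking $\alpha=\varepsilon/2$, $\sigma=\varepsilon/(2M_\alpha)$, $T=T_\alpha$ gives exactly your quantitative form. Together with Step 1 and $u\le 1$, this yields $\limsup_{t\to+\infty}\sup_{x\ge(c_v^*+\eta)t}(1-u)\le\alpha$ for every $\alpha>0$. So the ``main obstacle'' you flag is not actually an obstacle.

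The two routes buy different things. The paper's route is self-contained: it needs only the upper estimate of Theorem \ref{Thm1} and Lemma \ref{Lem-liou}. Yours is shorter, and it shows the corollary follows at once from the pointwise estimate on any region where $v$ is uniformly small.

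Your fallback argument is essentially the paper's proof, with two harmless variations. First, you get $v_\infty\equiv 0$ directly from the uniform smallness of $v$ on the expanding region, rather than via the strong maximum principle. Second, you replace the minimum-point Liouville argument by comparison with the spatially homogeneous ODE $\dot w=w\inf_z f_\infty(z,w,0)$. Its right-hand side is Lipschitz and positive on $[\beta,1)$, so the solution $W$ with $W(0)=\beta$ satisfies $u_\infty(t,\cdot)\ge W(t+s)$ for all $s>-t$, and $W(t+s)\to 1$ as $s\to+\infty$.
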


 The spreading speed result above does not by itself determine the asymptotic state behind the invasion front. To describe the convergence in the interior spreading region, we impose additional assumptions on the limiting ODE systems, which are close to  \cite[Assumption 2.5]{ducrot2019spreading}. 
 
\begin{assumption}\label{Ass-limit}
Assume the following limits  exist:
\begin{equation*}
\lim_{z\to \pm \infty} f(z, u, v) = f_\pm (u, v)  \text{ and }  \lim_{z\to \pm \infty} g(z, u, v) = g_\pm (u, v) \;\;  \text{ locally uniformly for } (u, v) \in [0, \infty) \times [0, \infty). 
\end{equation*}
For the limiting system, we impose that the vector fields $(f_{+},  g_+)$  (respectively $(f_-,  g_-)$) has a unique zero  $(u^*_+, v^*_+)$ (respectively $(u^*_-, v^*_-)$) in  the rectangle $[\beta, 1) \times (0, \overline{V}]$, where  $\beta:= \min\{\inf_{x\in\R} u_0(x), \delta \}$ and $\delta$ and  $\overline{V} $ are given in Assumption \ref{Ass-bound}.
\end{assumption}
  In general, the asymptotic behavior behind the front is not clear, which may depend to a large extent  on the dynamical  behavior of the following  limiting  ODE systems    
 \begin{equation}\label{ode1}
 \begin{cases}
 u_t= u f_+(u, v),\\
 v_t= vg_+(u,v),
 \end{cases}
 \end{equation} 
 and 
  \begin{equation}\label{ode2}
  \begin{cases}
  u_t= u f_-(u, v), \\
  v_t= vg_-(u,v).
  \end{cases}
  \end{equation}

 \begin{assumption}\label{Ass-LF}
 Assume  that  \eqref{ode1} and \eqref{ode2} admit   a strict Lyapunov function.    More precisely,  suppose that
 \begin{itemize}
 \item[(i)]  There exist strictly convex functions
  $$
  \Phi_{+},\Phi_{-}\in C^2\big([\beta,1)\times(0,\overline V],\mathbb R\big),
  $$
  such that \(\Phi_{+} \) attains its minimum at \((u_+^* , v_+^*) \), while \(\Phi_{-}\) attains its minimum at \((u_-^*,v_-^*)\).  And the functions  $\Phi_{+}$ and $\Phi_{-}$ satisfy
 \begin{equation*}
   (uf_+(u,v),  vg_+(u, v)) \cdot \nabla \Phi_{+} (u, v) \leq 0,  \;\; \forall (u, v) \in [\beta, 1)\times (0, \overline{V}],
 \end{equation*}
and 
 $$  (uf_-(u,v),  vg_-(u, v)) \cdot \nabla \Phi_{-} (u, v) \leq 0,   \;\;  \forall (u, v) \in [\beta, 1)\times (0, \overline{V}]. $$

\item[(ii)] Let   $( u^+(t),v^+(t) )$  and $( u^-(t),v^-(t) )$ be the  solutions of  \eqref{ode1} and \eqref{ode2}   with initial data $(\hat{u}_0,  \hat{v}_0) \in [\beta, 1)\times (0, \overline{V} ]$, respectively.  For   $i\in\{+, - \}$.   For each $(\hat{u}_0, \hat{v}_0) \in [\beta, 1)\times (0, \overline{V} ]$, assume that  we have: \\
if $\Phi_{i} (u^{i}(t), v^{i}(t) ) = \Phi_{i} (\hat{u}_0, \hat{v}_0)$  for all  $t\geq 0$, then 
$(\hat{u}_0, \hat{v}_0) = (u^*_{i}, v^*_{i}).  $

  \end{itemize}

 \end{assumption}

Then we have the following convergence result.
\begin{corollary}\label{Cor2}
Let the assumptions  of Theorem \ref{Thm1} and Assumptions \ref{Ass-limit} and \ref{Ass-LF}  be satisfied. 
 Let $(u,v)$ be the solution of  \eqref{Pb}.   Assume $d=1$.  Then   the following statements  hold true:  
\begin{itemize}
\item[(i)]  if $c_1\geq  2\sqrt{r^+}$, then for all  $\eta\in (0,  c_v^*/2)$,
\begin{equation}\label{22}
\lim_{t \to +\infty} \sup_{ 0\leq x\leq (c_v^*- \eta) t}\big\{ \left| u(t,x)-u^*_{-}  \right| + \left| v(t,x) - v^*_{-}\right|     \big\} =0;
\end{equation}
\item [(ii)] if $0<c_1< 2\sqrt{r^+}$,  then for all  $\eta\in (0, c_1/2)$,
\begin{equation}\label{33}
\lim_{t \to +\infty} \sup_{ 0 \leq x \leq (c_1- \eta) t}\big\{ \left| u(t,x)-u^*_{-}  \right| + \left| v(t,x) - v^*_{-}\right|     \big\} =0,
\end{equation}
and for all $\eta\in (0, (c_v^* -c_1) / 2)$, 
\begin{equation}\label{44}
\lim_{t \to +\infty} \sup_{ (c_1+ \eta ) t \leq x \leq (c_v^*- \eta) t}\big\{ \left| u(t,x)-u^*_{+}  \right| + \left| v(t,x) - v^*_{+}\right|     \big\} =0.
\end{equation}
 \end{itemize}

\end{corollary}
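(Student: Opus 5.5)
Corollary \ref{Cor2} concerns the interior region behind the front, so I would combine the persistence part of Theorem \ref{Thm1} with a Lyapunov-function argument applied to entire solutions obtained as limits along moving frames. Since $d=1$, both equations carry the same diffusion. This is what makes a convex Lyapunov function usable at the PDE level: if $(U,V)$ solves the limiting homogeneous system $U_t=U_{xx}+Uf_\pm$, $V_t=V_{xx}+Vg_\pm$, then
\[
(\partial_t-\partial_{xx})\Phi_\pm(U,V)=\nabla\Phi_\pm\cdot(Uf_\pm,Vg_\pm)-\big\langle D^2\Phi_\pm(U,V)(U_x,V_x),(U_x,V_x)\big\rangle\le 0 .
\]
Hence $\Phi_\pm(U,V)$ is a subsolution of the heat equation.

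\emph{Step 1 (a priori bounds).} I would first use the invariant region of Lemma \ref{Lem-bound} to get $\beta\le u\le 1$ and $0\le v\le \overline V$. I would then use Theorem \ref{Thm1} to get uniform positivity
\[
\liminf_{t\to\infty}\inf_{0\le x\le (c_v^*-\eta/2)t} v>0 .
\]
Together these say that, on the relevant cones, $(u,v)$ stays in a compact subset $K$ of $[\beta,1]\times(0,\overline V]$. One point needs care: $u$ may approach $1$. To exclude this, note that where $v\ge \varepsilon$, the monotonicity in Assumption \ref{Ass-fg}(ii),(iii) gives $f(z,u,v)\le f(z,u,\varepsilon)$, and this is negative near $u=1$. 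Hence $u$ stays bounded away from $1$ in the positivity cone, possibly after shrinking $\eta$. I expect to have to verify this carefully, perhaps via a comparison of $u$ with an ODE supersolution.

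\emph{Step 2 (contradiction via entire solutions).} Suppose \eqref{22} fails. Then there exist $t_n\to\infty$ and $x_n$ in the cone with
\[
|u(t_n,x_n)-u_-^*|+|v(t_n,x_n)-v_-^*|\ge \epsilon .
\]
In case (i) the cone is $0\le x_n\le (c_v^*-\eta)t_n$; since $c_v^*\le c_1$ when $c_1\ge 2\sqrt{r^+}$ (one checks this from the formula in Theorem \ref{Thm1}), we have $x_n-c_1t_n\to-\infty$. Set $(u_n,v_n)(t,x)=(u,v)(t+t_n,x+x_n)$. Parabolic estimates give locally uniform convergence to an entire solution $(U,V)$ of the $(f_-,g_-)$ system. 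Assumption \ref{Ass-limit} applies because $z=x+x_n-c_1(t+t_n)\to-\infty$ locally uniformly. Moreover $(U,V)$ takes values in $K$ for all $t\in\R$, $x\in\R$, since the points $(t+t_n,x+x_n)$ remain in a slightly larger cone for $|t|\le T$; for negative times $t\ge -\kappa t_n$ I would use a diagonal argument.

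\emph{Step 3 (Liouville-type conclusion; the main obstacle).} Let $M(t)=\sup_x\Phi_-(U,V)(t,\cdot)$. By the maximum principle $M$ is nonincreasing, and it is bounded because the range lies in $K$. Thus $M(t)\to M_{\pm\infty}$ as $t\to\pm\infty$. Take a sequence along which the sup is nearly attained as $t\to-\infty$ and shift again in $t$ and $x$. This yields an entire solution $(\tilde U,\tilde V)$ on which $\Phi_-$ attains its space-time maximum $M_{-\infty}$ at an interior point. By the strong maximum principle for the subsolution $\Phi_-(\tilde U,\tilde V)$, it is then constant. The displayed inequality then forces $\tilde U_x=\tilde V_x=0$, by strict convexity, and also forces $\Phi_-$ to be constant along the ODE trajectory. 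Assumption \ref{Ass-LF}(ii) then gives $(\tilde U,\tilde V)\equiv(u_-^*,v_-^*)$, so $M_{-\infty}=\min\Phi_-$. Monotonicity of $M$ and uniqueness of the minimizer then give $(U,V)\equiv(u_-^*,v_-^*)$, contradicting the choice of $x_n$.

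The delicate parts are handling the negative times and justifying the strong maximum principle for a space-time maximum attained only in the limit; this is why I work with $\sup_x$ and the double shift. Case (ii) is identical. In \eqref{33} the points satisfy $x_n-c_1t_n\le-\eta t_n\to-\infty$, which gives the $(f_-,g_-)$ limit. In \eqref{44} they satisfy $x_n-c_1t_n\ge\eta t_n\to+\infty$ while staying behind $(c_v^*-\eta)t$, which gives the $(f_+,g_+)$ limit with $\Phi_+$.
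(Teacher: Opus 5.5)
Your overall architecture matches the paper's: contradiction sequence, shift, limiting entire solution of the $(f_\pm,g_\pm)$ system, and a Liouville conclusion from the Lyapunov function using $d=1$. However, Step 2 has a genuine gap for \eqref{22} and \eqref{33}.

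The only positivity you take from Theorem \ref{Thm1} is on the one-sided cone $\{0\le x\le (c_v^*-\eta/2)t\}$. The bad points, by contrast, only satisfy $0\le x_n\le (c_v^*-\eta)t_n$ (resp.\ $\le (c_1-\eta)t_n$). Along a subsequence with $x_n$ bounded, e.g.\ $x_n\equiv 0$, the shifted points $(t+t_n,x+x_n)$ with $x<-\sup_n x_n$ never enter that cone. So nothing you have proved gives $V$ a positive lower bound on a left half-line. The range of $(U,V)$ then need not lie in a compact subset of $[\beta,1)\times(0,\overline V]$, and $\sup_x\Phi_-(U,V)(t,\cdot)$ may be $+\infty$; the Lotka--Volterra $\Phi_i$ blows up like $-\ln v$ as $v\to0$. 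Step 3 then has nothing to work with, and the ``slightly larger cone'' is not supplied by your argument.

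The missing ingredient is leftward persistence. The paper gets it by applying the spreading result to the reflected problem $x\mapsto -x$, where the environment shifts at speed $-c_1$. The lower-bound proof still works there, since Theorem \ref{Thm-KPP} holds for all $c_1\in\R$. This gives $c_l>0$ with $\liminf_{t\to\infty}\inf_{-(c_l-\eta)t\le x\le 0}v>0$. Hence $v\ge\varepsilon_0$ on $\{t\ge T,\ -\delta t\le x\le (c_v^*-\eta/2)t\}$ for a fixed $\delta\in(0,c_l)$, and this two-sided cone eventually contains every shifted compact set. For \eqref{44} the problem does not arise, since $x_n\ge (c_1+\eta)t_n\to\infty$. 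Once you have this, your diagonal argument in negative time is unnecessary: locally uniform convergence only involves compact sets, on which $t+t_n\to+\infty$.

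There are also some smaller repairs.
\begin{itemize}
\item In Step 3, the strong maximum principle at the interior space-time maximum $(0,0)$ gives $\Phi_-(\tilde U,\tilde V)\equiv M_{-\infty}$ only for $t\le 0$. You therefore get a spatially homogeneous \emph{backward} half-orbit with constant $\Phi_-$, whereas Assumption \ref{Ass-LF}(ii) concerns forward orbits. To fix this, take an accumulation point $q$ of $(\tilde U,\tilde V)(-s)$ as $s\to\infty$. By continuity of the flow, $\Phi_-$ is constant along the forward orbit of $q$, so $q=(u_-^*,v_-^*)$ and $M_{-\infty}=\min\Phi_-$. Then $\Phi_-(U,V)\le\min\Phi_-$ forces $(U,V)\equiv(u_-^*,v_-^*)$.
\item The implication $\langle D^2\Phi_-\xi,\xi\rangle=0\Rightarrow\xi=0$ needs a nondegenerate Hessian or an extra argument; strict convexity alone does not give it.
\item In Step 1, an ODE supersolution cannot keep $u$ away from $1$ on a cone whose edges see $u$ close to $1$. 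Use a second shift instead: a limiting entire solution with $\tilde V\ge\varepsilon_0$ cannot attain an interior maximum $\tilde U=1$ as long as $f_-(1,\varepsilon_0)<0$. This is the same strong maximum principle step the paper uses to get $\tilde u<1$.
\end{itemize}
With these fixes, your Step 3 is a self-contained proof of the Liouville statement that the paper imports as Claim 7.1 of \cite{ducrot2019spreading}.
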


\begin{remark} 
\rm
From the formula of $c_v^*$ given in  Theorem \ref{Thm1}, we have the   fact:  
\begin{equation*}
 \begin{cases}
 c_v^*= c_v^*(c_1) 
 \leq c_1   &\text{ if } c_1\geq 2\sqrt{r^+},\\
  c_v^*= c_v^*(c_1) >c_1    &\text{ if } c_1 <2\sqrt{r^+}.
\end{cases}
\end{equation*}
Corollary \ref{Cor2}  shows that, in the invaded region of the predator,   the solution converges to the positive coexistence equilibrium selected by the limiting environment seen in the moving frame  $ x-c_1t$.  The asymptotic state depends on the relative position between the predator front ($x=c_v^* t$) and the shifting environmental interface ($x=c_1 t$). 

The assumption $d=1$ is  used only in the Lyapunov argument for the limiting systems. It can be removed in special cases; see example \eqref{Pb-ex}.

This corollary does not describe the  fine asymptotics  near $x=c_1t$ or  $x=c_v^*t$. We refer to \cite{lam-wu2025logarithmic} for  the logarithmic correction results  of KPP equations in shifting environments.
\end{remark}

\section{Preliminaries}
\subsection{Scalar KPP equations in shifting environments}
We first recall  a very recent result   
on the spreading speed of the scalar Fisher--KPP equation in a shifting environment, established by \cite{GGM2024,LNY2024}. 
Let us focus on the following equation
\begin{equation}\label{KPP}
\begin{cases}
\partial_t w = \partial_{xx} w + w \left(g(x-c_1 t) -w  \right), &t>0, x\in \R, \\
w(0, x)= w_0(x), &x\in\R,
\end{cases}
\end{equation}
where $c_1 \in \R$ and the initial function $w_0$ is   nonnegative, nontrivial, bounded, continuous and compactly supported on $\R$.  
Assume that the function $g$ is continuous  and  positive on $\R$ with 
\begin{equation*}
g^{\pm}:=  g(\pm \infty) >0.
\end{equation*} 
Similarly to \eqref{eigen-eq} and \eqref{ev}, we define $\lambda_{1,g}$ as the generalized principal eigenvalue of the  following eigenvalue problem:
\begin{equation*}
\phi_{yy} + g(y) \phi = \lambda \phi,  \quad \forall  y\in \R.
\end{equation*}
Namely, 
\begin{equation*}
\lambda_{1,g}= \inf\left\{ \lambda \in \R: \;\;  \exists \psi \in C^2_{loc} (\R), \psi>0,  \psi'' +g(y) \psi \leq \lambda \psi \text{ in } \R  \right\}.
\end{equation*}
With these notations,   the exact spreading speed $c^*$  of \eqref{KPP} is characterized  by  the  following theorem. Note that the spreading speed of \eqref{KPP}   is completely  determined by  quantities $g^{+}$, $g^{-}$, $c_1$ and $\lambda_{1,g}$.  Different approaches to its proof can be found in \cite{GGM2024, LNY2024}. The dependence  of the spreading speed   $c^*$ on $c_1$ has been illustrated  in \cite{GGM2024, LNY2024} for various scenarios.
\begin{theorem}[\cite{GGM2024, LNY2024}]\label{Thm-KPP}
Let $w$ be a solution of \eqref{KPP}. Then the rightward  spreading speed of $w$ is given by 
\begin{equation*}
c^*= \begin{cases}
2\sqrt{g^+}, & \text{ if } c_1 \leq 2 \sqrt{g^+}, \\
c_1,   & \text{ if } 2\sqrt{g^+} \leq c_1 \leq  2 \sqrt{\lambda_{1,g}}, \\
\frac{c_1}{2} - \sqrt{\lambda_{1,g}- g^{-} }  + \frac{g^{-}}{\frac{c_1}{2} - \sqrt{\lambda_{1,g}- g^{-} }  },   \quad  & \text{ if }   2 \sqrt{\lambda_{1,g} }  \leq  c_1 \leq 2 (  \sqrt{g^-} + \sqrt{\lambda_{1,g}-g^{-}}),\\
2\sqrt{g^-},    & \text{ if } c_1  \geq   2 (  \sqrt{g^-} + \sqrt{\lambda_{1,g}-g^{-}}).
\end{cases}
\end{equation*}
\end{theorem}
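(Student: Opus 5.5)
Since Theorem \ref{Thm-KPP} is quoted from \cite{GGM2024, LNY2024}, I only sketch how I would prove it; the plan is a WKB / Hamilton--Jacobi argument with an effective junction condition on the moving line $x=c_1t$. I would set $w^\varepsilon(t,x)=w(t/\varepsilon,x/\varepsilon)$ and $w^\varepsilon=\exp(-\varphi^\varepsilon/\varepsilon)$. Then $\varphi^\varepsilon$ satisfies
$$\partial_t\varphi^\varepsilon-\varepsilon\partial_{xx}\varphi^\varepsilon+|\partial_x\varphi^\varepsilon|^2+g\big((x-c_1t)/\varepsilon\big)-w^\varepsilon=0 .$$
The heterogeneity concentrates onto the line $x=c_1t$. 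On $\{x>c_1t\}$ the limiting Hamiltonian is $|p|^2+g^+$, and on $\{x<c_1t\}$ it is $|p|^2+g^-$. On the line itself, the limit should satisfy a flux-limited (Kirchhoff-type) condition $\partial_t\varphi+c_1\partial_x\varphi\ge -\lambda_{1,g}$, in the sense of Imbert--Monneau, with effective flux limiter $\lambda_{1,g}$. I would first show that the half-relaxed limits $\varphi^*,\varphi_*$ exist using the Evans--Souganidis bounds. I would then show they are, respectively, a sub- and supersolution of the variational inequality $\min\{\varphi,\ \partial_t\varphi+H(x/t,\partial_x\varphi)\}=0$ with this junction condition. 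The spreading speed is then the largest $c$ with $\varphi(1,c)=0$.

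The key step, which I expect to be the main obstacle, is identifying the junction condition. For the supersolution side (the lower bound on spreading) I would test against the principal eigenfunction $\psi_R$ of $\psi''+g\psi=\lambda_R\psi$ on $(-R,R)$ with Dirichlet data, with $\lambda_R\uparrow\lambda_{1,g}$. In the moving frame, $\underline w=\eta e^{-c_1 y/2}\psi_R(y)e^{(\lambda_R-c_1^2/4)t}$ multiplied by a slowly varying exponential in $t$ is a subsolution for the linearised problem. This shows the mass near the line grows at rate at least $\lambda_{1,g}$ in the moving frame. For the subsolution side (the upper bound) I would use a positive supersolution $\psi$ of $\psi''+g\psi\le(\lambda_{1,g}+\delta)\psi$ given by the definition of $\lambda_{1,g}$. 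Using that $g\to g^\pm$, I would glue it to exponentials $e^{-\mu(x-ct)}$ in the two outer regions, so that a linear supersolution limits growth along the line to $\lambda_{1,g}$. The difficulty is making this gluing uniform as $\varepsilon\to0$. I would handle it by working on the moving domain $|x-c_1t|\le R\varepsilon$ and passing $R\to\infty$ after $\varepsilon\to0$.

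Once the limit problem is established, the speed follows by solving it explicitly through the optimal-control representation: the cost of a path is $\int(\dot\gamma^2/4-g^\pm)$ off the line, and $c_1^2/4-\lambda_{1,g}$ per unit time spent moving with the line. I would then split into four cases.
\begin{itemize}
\item If $c_1\le2\sqrt{g^+}$, the best path goes straight into the right region, giving $2\sqrt{g^+}$.
\item If $2\sqrt{g^+}\le c_1\le2\sqrt{\lambda_{1,g}}$, riding the line costs nothing, while the right region is subcritical ahead of $c_1$; this gives locking, $c^*=c_1$.
\item If $c_1>2\sqrt{\lambda_{1,g}}$, riding the line for time $s$ costs $s(c_1^2/4-\lambda_{1,g})$. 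Leaving at $c_1s$ and reaching $x<c_1t$ in the $g^-$ region costs $(c_1s-x)^2/(4(t-s))-g^-(t-s)$ (or the symmetric expression). Minimising over $s$ and setting the minimum to zero gives the third formula, as long as the optimal $s$ is positive.
\item If $c_1\ge2(\sqrt{g^-}+\sqrt{\lambda_{1,g}-g^-})$, the optimal $s$ hits $0$ and the speed reduces to $2\sqrt{g^-}$.
\end{itemize}
Finally, I would check continuity at the case boundaries and use uniqueness (a comparison principle for the flux-limited problem) to conclude that $\varphi^*=\varphi_*$. This yields both halves of the spreading statement: $w\to0$ beyond $c^*t$ by the sign of $\varphi$, and $\liminf w>0$ inside by the standard Evans--Souganidis argument on the interior of $\{\varphi=0\}$.
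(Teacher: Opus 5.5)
The paper does not prove this theorem. It quotes it from \cite{GGM2024, LNY2024} and uses it as a black box, so your Hamilton--Jacobi/flux-limited-junction sketch is an independent route. Its endgame is correct. Ride the line until time $\tau$, then spend time $\sigma=1-\tau$ in the $g^-$ region to reach $(1,c)$ with $c<c_1$. The cost is $\frac{c_1^2}{4}-\lambda_{1,g}-\frac{c_1(c_1-c)}{2}+\frac{(c_1-c)^2}{4\sigma}+\sigma(\lambda_{1,g}-g^-)$. Its minimum over $\sigma>0$ is $\frac{c_1^2}{4}-\lambda_{1,g}-(c_1-c)\bigl(\frac{c_1}{2}-\sqrt{\lambda_{1,g}-g^-}\bigr)$, whose zero is exactly the third formula. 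The constraint $\sigma^*\le 1$ is exactly $c_1\le 2(\sqrt{g^-}+\sqrt{\lambda_{1,g}-g^-})$. Jensen's inequality shows that no line-to-line excursion beats riding, since it costs at least $c_1^2/4-\max\{g^+,g^-\}\ge c_1^2/4-\lambda_{1,g}$ per unit time. Cases 1, 2 and 4 are also right.

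The real problem is in the step you yourself flag as the crux: the junction condition is misstated. Your condition $\partial_t\varphi+c_1\partial_x\varphi\ge-\lambda_{1,g}$, ``with flux limiter $\lambda_{1,g}$'', omits the Doppler shift.
\begin{itemize}
\item In the moving frame $y=x-c_1t$ the limiting Hamiltonians are $H^\pm(p)=p^2-c_1p+g^\pm$.
\item The Hopf--Cole corrector solves $\Psi''+c_1\Psi'+g\Psi=A\Psi$. With $\Psi=e^{-c_1y/2}\phi$ this becomes $\phi''+(g-c_1^2/4)\phi=A\phi$.
\item So the effective limiter is $A=\lambda_{1,g}-c_1^2/4$. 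It is admissible in the Imbert--Monneau sense because $\lambda_{1,g}\ge\max\{g^+,g^-\}$ gives $A\ge\max_\pm\min H^\pm$.
\end{itemize}
Your own subsolution $\eta e^{-c_1y/2}\psi_R(y)e^{(\lambda_R-c_1^2/4)t}$ grows at rate $\lambda_R-c_1^2/4$, not $\lambda_{1,g}$. Likewise, the supersolution built from $\psi$ caps growth at $\lambda_{1,g}+\delta-c_1^2/4$.

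Taken literally, a limiter $\lambda_{1,g}$ in the moving frame makes riding the line cost $-\lambda_{1,g}<0$ per unit time. That would force $c^*\ge c_1$ for every $c_1$, contradicting cases 3 and 4. The running cost $c_1^2/4-\lambda_{1,g}$ you use later is the correct one, so restate the junction condition to match it. Your labels are also swapped. The eigenfunction argument gives the subsolution property of $\varphi^*$, which is the lower bound on spreading. The gluing of $\psi$ gives the supersolution property of $\varphi_*$.

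Two smaller points. First, because of the obstacle, the value function is Freidlin's $\varphi(t,x)=\inf_\gamma\sup_{0\le s\le t}\int_s^t L$, not the plain path integral. Your answer survives this, but you should say why:
\begin{itemize}
\item On the ride-then-descend path the positive cost is paid first, so $\sup_s\int_s^1L=\int_0^1L$.
\item For the upper bound, $\varphi_*\ge\max\{J,0\}$ suffices, where $J$ is the value of the linearized problem.
\end{itemize}
Second, the two hard ingredients are only asserted. One is homogenizing the viscous equation across an $\varepsilon$-scale defect, which requires correctors with prescribed slopes at $\pm\infty$. The other is a comparison principle for the flux-limited obstacle problem with the singular initial trace ($0$ at the origin, $+\infty$ elsewhere). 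The $1$-homogeneity $\varphi^*(\kappa t,\kappa x)=\kappa\varphi^*(t,x)$ is the natural tool for that initial trace. With the limiter corrected, the outline is a sound roadmap for reproving the cited theorem, which the paper sidesteps by quoting it.
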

\begin{remark} \label{Rem-kpp}
\rm
From  \cite[Theorem 2.13]{LNY2024},  the  same conclusion also holds for the general KPP  reaction terms. 
\end{remark}

\subsection{Invariant region and a Liouville-type lemma}
Now we prove that the solution of \eqref{Pb} is  invariant in a rectangle. 
\begin{lemma}\label{Lem-bound}
Let Assumptions \ref{Ass-fg}, \ref{Ass-bound} and \ref{Ass-IC} be satisfied. Denote $\beta:= \min\{  \inf_{x\in\R } u_0(x),   \delta \}$.  Then  
$$\beta \leq  u(t,x)\leq 1,   \;\; 0 \leq v(t,x) \leq   \overline{V}, \quad  \forall  t\geq 0, x\in \R. $$  
\end{lemma}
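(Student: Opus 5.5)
The plan is to prove the invariance of the rectangle $[\beta,1]\times[0,\overline V]$ by a comparison argument that uses the quasi-monotone (predator--prey) structure only at the boundaries of the rectangle. Local existence and uniqueness of a bounded classical solution follow from the Lipschitz continuity in Assumption \ref{Ass-fg}(i). Nonnegativity of $u$ and $v$ is then immediate: each of $u,v$ solves a linear equation of the form $\partial_t w=D\partial_{xx}w+c(t,x)w$ with bounded coefficient $c$ (for instance $c=g(x-c_1t,u,v)$). The maximum principle gives $u,v\ge0$; in fact $u>0$, because $u_0\ge\inf u_0>0$.

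For the upper bounds, I argue by modification and contradiction, on a time interval $[0,T]$ where the solution stays in a slightly enlarged box. For $v$: on the set where $v\ge\overline V$, monotonicity in $u$ gives $g(z,u,v)\le g(z,1,v)$ as long as $u\le1$. Assumption \ref{Ass-fg}(iv) gives $g(z,1,v)\le g(z,1,\overline V)\le0$ by Assumption \ref{Ass-bound}. Hence $\overline V$ is a supersolution of the $v$-equation with $u$ frozen. For $u$: on the set where $u\ge1$, we have $f(z,u,v)\le f(z,u,0)$ for $v\ge0$. I also need $f(z,u,0)\le0$ for $u\ge1$. This is not stated explicitly, so I would either extend the nonlinearities by freezing them outside $[0,1]$, or, more cleanly, work directly with the truncated system, for which the constants $1$ and $\overline V$ are super-solutions. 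For the lower bound on $u$: when $u\le\beta\le\delta$ and $v\le\overline V$, decreasing monotonicity in $v$ gives $f(z,u,v)\ge f(z,u,\overline V)\ge\inf_{z,\,0\le u\le\delta}f>0$. Thus $\beta$ is a strict subsolution of the $u$-equation.

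To make this rigorous without a system comparison principle, I plan the following argument. Introduce the truncated system, with $u$ replaced by $\min\{\max\{u,\beta\},1\}$ in $g$ and $v$ replaced by $\min\{v,\overline V\}$ in $f$. Solutions of the truncated system satisfy, by the scalar comparison principle applied to each equation separately with the other component treated as a given coefficient:
\begin{itemize}
\item[] $v\le\overline V$, since $g(z,\tilde u,\overline V)\le g(z,1,\overline V)\le0$;
\item[] $\beta\le u$, using the strict positivity of $f$ near $\beta$;
\item[] $u\le1$, using $f(z,1,\tilde v)\le f(z,1,0)=0$.
\end{itemize}
To keep this a linear comparison, write $u f(z,u,\tilde v)-1\cdot f(z,1,\tilde v)$ as a Lipschitz term times $(u-1)$ plus a nonpositive remainder. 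Consequently the truncations are inactive, the truncated solution solves \eqref{Pb}, and uniqueness identifies it with $(u,v)$. Global existence follows from the resulting a priori bounds.

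The main obstacle is the upper bound $u\le1$. It needs $f(z,1,v)\le0$ for $v\ge0$, which does hold: by Assumption \ref{Ass-fg}(ii), $v\mapsto f$ is decreasing and $f(\cdot,1,0)=0$. The remaining care is in rigorously handling the unbounded domain. Here I would use the standard Phragmén--Lindelöf maximum principle for bounded solutions of linear parabolic equations with bounded coefficients on $\R$. This is applied to $w=u-1$, $w=v-\overline V$ and $w=\beta-u$, after the Lipschitz linearization described above.
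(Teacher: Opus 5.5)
Your proposal is correct and follows essentially the paper's route: componentwise scalar comparison with the constants $0$, $1$, $\overline V$ and $\beta$ as sub/supersolutions, using exactly the inequalities $f(z,1,v)\le f(z,1,0)=0$, $g(z,u,\overline V)\le g(z,1,\overline V)\le 0$ and $f(z,\beta,v)\ge f(z,\beta,\overline V)>0$. The truncated system is a harmless extra safeguard but not needed, since the paper proves the bounds in the order $u,v\ge 0$, then $u\le 1$ (needing only $v\ge 0$), then $v\le\overline V$ (needing $u\le 1$), then $u\ge\beta$ (needing $v\le\overline V$), so no circularity arises.
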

\begin{proof}
 Although the full system does not admit  a comparison principle, scalar comparison can still be applied to each component separately.  We first observe that $0$ is a subsolution for each component equation.
 Since $0$ satisfies $v$-equation for any $u$   and initial function $v_0\geq 0$, we obtain $v(t,x) \geq 0$ for all $t\geq 0$ and $x\in \R$.  Similarly,  we have $u(t,x) \geq 0$ for all $t\geq 0$ and $x\in \R$.    From the monotonicity of $f$ and $g$, we have 
 $$   f(z, 1, v)  \leq  f(z,1,0) =0 \quad  \forall z\in \R, v\geq 0  \;  \text{ and }  \; g(z, u, \overline{V}) \leq g(z,1, \overline{V}) \leq 0 \quad \forall z\in\R,  u \in [0,1].$$
Hence, $u=1$ is a supersolution of $u$-equation for any $v\geq 0$ and $v= \overline{V}$ is a supersolution of $v$-equation for any $u \in [0,1]$.  Together with the initial condition in Assumption \ref{Ass-IC},  this yields $ (u, v)(t,x) \in [0, 1] \times [0, \overline{V}]$ for all $t\geq 0$  and $x\in \R$.

Next we prove that $u$ has  a positive lower bound for positive initial data.  By Assumptions \ref{Ass-fg} and \ref{Ass-bound}, the constant function  $u=\beta $  satisfies 
\begin{equation*}
\partial_t \beta  -\partial_{xx}  \beta \leq  \beta f(x-c_1 t,\beta, \overline{V} ) \leq  \beta f(x-c_1 t ,\beta, v ) ,  \quad \forall t\geq 0,  x\in \R, v\in [0, \overline{V}].
\end{equation*}
Thus, the comparison principle for scalar equation implies that $u\geq \beta>0$.  The proof is complete.

\end{proof}

Next we  state a Liouville-type lemma, which   plays an  important role in the  proof of pointwise estimate. 
\begin{lemma}\label{Lem-liou}
Let $u$  be a bounded entire solution of
 \begin{equation*}
  \partial_t u= d \partial_{xx} u +  u \tilde{f}(x-c_1 t, u, 0), \quad  (t,x) \in \R^2.
 \end{equation*} 
where   $\tilde f(\cdot, u, 0)$  is a locally uniform limit of translations of  $f(\cdot, u, 0)$
and satisfies the monostable condition inherited from Assumption \ref{Ass-fg}\textit{(iii)}. 
If
\[
0<\beta\leq u(t,x)\leq 1 \quad \text{in } \mathbb R^2,
\]
then $u \equiv 1$.
\end{lemma}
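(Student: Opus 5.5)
The plan is to compare $u$ with the solution of an ODE driven by the uniform positivity of the reaction term away from $1$. Set
$$m:=\inf_{t,x} u(t,x)\in[\beta,1].$$
We argue by contradiction and assume $m<1$. By the monostable condition inherited from Assumption \ref{Ass-fg}(iii), together with the uniform-in-$z$ Lipschitz continuity in $u$, the function
$$h(s):=\inf_{z\in\R} s\tilde f(z,s,0)$$
is continuous on $[\beta,1]$. It is also positive on $[\beta,1)$, with $h(1)=0$. The uniformity in $z$ is preserved under locally uniform limits of translations. This is exactly where the hypothesis that $\inf_z f(z,u,0)>0$ matters: it transfers to $\tilde f$, because the infimum over $z$ is taken uniformly.

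The main step is a comparison with the ODE $\dot w=h(w)$. Let $w(t)$ be its solution with $w(0)=m$. Since $h>0$ on $[m,1)$ and $h(1)=0$, the solution $w$ is increasing and tends to $1$ as $t\to+\infty$. Next fix any $s\in\R$ and consider the spatially constant function $\underline u(t,x):=w(t-s)$ for $t\ge s$. It satisfies
$$\partial_t\underline u-d\partial_{xx}\underline u=h(\underline u)\le \underline u\,\tilde f(x-c_1t,\underline u,0).$$
At time $s$ we have $\underline u(s,\cdot)=m\le u(s,\cdot)$. The scalar comparison principle applies, since $u\tilde f(\cdot,u,0)$ is Lipschitz in $u$ on $[\beta,1]$ and both functions are bounded. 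It gives $u(t,x)\ge w(t-s)$ for all $t\ge s$ and $x\in\R$.

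Now fix $(t,x)$ and let $s\to-\infty$. This yields $u(t,x)\ge\lim_{\tau\to\infty}w(\tau)=1$. Combined with $u\le 1$, we get $u\equiv1$, which contradicts $m<1$. Strictly speaking the contradiction setup is unnecessary: if $m=1$ the conclusion is immediate, and otherwise the argument above applies directly.

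The only delicate point is the justification of the uniform lower bound $h>0$ on $[m,1)$. Continuity of $h$ follows from the Lipschitz bound that is uniform in $z$, and positivity comes from the uniform infimum in Assumption \ref{Ass-fg}(iii) as inherited by $\tilde f$. Beyond this, the comparison on the half-space $[s,\infty)\times\R$ is standard for bounded solutions.
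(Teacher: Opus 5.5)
Your proof is correct, but it takes a different route from the paper.

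The paper works in the moving frame $w(t,x)=u(t,x+c_1t)$ and picks a minimizing sequence for $m=\inf w<1$. It then uses parabolic estimates to extract a limiting entire solution $\tilde w$ of a translated equation that attains its global minimum $m$ at $(0,0)$. Evaluating the equation there ($\partial_t\tilde w=\partial_x\tilde w=0$, $\partial_{xx}\tilde w\ge 0$) forces $m\hat f(0,m,0)\le 0$. This contradicts the inherited bound $\inf_z\hat f(z,m,0)>0$.

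You instead compare $u$ from below with the spatially constant subsolution $w(t-s)$, where $\dot w=h(w)$ and $w(0)=m$, and then let the starting time $s\to-\infty$.

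\textbf{What your route buys.} It needs no compactness, no parabolic estimates and no pointwise use of the equation at an extremum. The moving frame plays no role, since the subsolution is independent of $x$. It also gives the quantitative bound $u(t,x)\ge w(t-s)$ for any solution defined on $[s,\infty)$ with $u(s,\cdot)\ge m$. This is a uniform convergence mechanism, not just a rigidity statement.

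\textbf{What it costs.} You use positivity of $\inf_z\tilde f(z,\sigma,0)$ on the whole interval $\sigma\in[m,1)$, together with continuity (indeed Lipschitz continuity) of $h$. You correctly derive the latter from the uniform-in-$z$ Lipschitz bound in Assumption~\ref{Ass-fg}(i). The paper's argument only needs positivity at the single level $m$. Under Assumption~\ref{Ass-fg}(iii) both requirements hold, so either proof works.
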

\begin{proof}
Denote  $w(t,x) := u(t, x+c_1 t)$ and  $m: = \inf_{(t,x) \in \R^2}w(t,x)$.  Assume  $m<1$.  There exists a sequence $\{ (t_n,x_n) \}_n \subset  \R^2$ such that  $w(t_n, x_n) \to m$ as $n \to +\infty$.   Define $w_n (t,x) := w(t+t_n, x+x_n)$.  By parabolic estimates, up to a subsequence,  it follows that  
\begin{equation*}
\begin{split}
\lim_{n\to +\infty}  w_n (t,x) = \tilde{w} (t,x)   \quad  &\text{ locally uniformly in } (t,x) \in \R^2, \\
 \lim_{n\to +\infty} \tilde{f}(x+x_n, w_n, 0) = \hat{f} (x, \tilde{w}, 0)   \quad  &\text{ locally uniformly in } (t,x) \in \R^2,
\end{split}
\end{equation*}
and  $\tilde{w}$ satisfies 
\begin{equation}\label{wtilde}
\partial_{t} \tilde{w} = d \partial_{xx} \tilde{w} +  c_1 \partial_x \tilde{w} + \tilde{w} \hat{f} (x, \tilde{w}, 0). 
\end{equation}
Note that   $\tilde{w}(0,0) = m  \in [\beta, 1)$ and $(0, 0) $ is the global minimum point.
Thus we have  
$$\partial_{t} \tilde{w}(0,0) =0, \;\;  \partial_{x} \tilde{w}(0,0) =0  \;\; \text{ and }  \;\;   \partial_{xx} \tilde{w}(0,0) \geq 0.  $$
By Assumption \ref{Ass-fg}\textit{(iii)}, it follows   that  $\inf_{x\in\R} \hat{f} (x, m, 0)>0$.  This contradicts  \eqref{wtilde}. We have  $m= 1$, namely $w\equiv 1$, and hence $u\equiv 1$. This proves the lemma.
 
\end{proof}

\section{Proof of the spreading speed}
To prove Theorem \ref{Thm1},  we first derive the  following  pointwise estimate, which  plays an important role in the lower estimate of spreading speed of $v$.

\begin{lemma}\label{Lem-estimate}
Let the assumptions of  Theorem \ref{Thm1} be satisfied.   For each $\alpha>0$, there exist  constants  $M_{\alpha}>0$ and $T_{\alpha} >0$ such that 
\begin{equation*}
1-u(t,x) \leq \alpha+ M_{\alpha} v(t,x), \quad \forall t\geq T_{\alpha}, \forall x\in\R.
\end{equation*}
\end{lemma}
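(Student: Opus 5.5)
The plan is to argue by contradiction through a compactness (blow-up) argument, using the Liouville-type Lemma \ref{Lem-liou} to identify the limit. Fix $\alpha>0$ and suppose the conclusion fails. Then for every $n\in\mathbb N$ (taking $M=n$, $T=n$) there is a point $(t_n,x_n)$ with $t_n\ge n$ and
\begin{equation*}
1-u(t_n,x_n) > \alpha + n\, v(t_n,x_n).
\end{equation*}
Since $0\le 1-u\le 1-\beta\le 1$ by Lemma \ref{Lem-bound}, this forces $v(t_n,x_n)< 1/n\to 0$, while $u(t_n,x_n)\le 1-\alpha$.

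Next I pass to the limit in the moving frame. Set $u_n(t,x):=u(t+t_n,x+x_n)$, $v_n(t,x):=v(t+t_n,x+x_n)$, and $z_n:=x_n-c_1t_n$. These functions solve \eqref{Pb} with $f,g$ evaluated at $x+z_n-c_1t$ and are defined on $t\ge -t_n$. By Lemma \ref{Lem-bound} they are bounded, and standard parabolic estimates (interior Schauder/$W^{2,1}_p$ estimates, with the reaction terms bounded and uniformly continuous in $z$ by Assumption \ref{Ass-fg}(i)) give, along a subsequence, locally uniform convergence $(u_n,v_n)\to(u_\infty,v_\infty)$ on $\R^2$. The translated nonlinearities also converge locally uniformly: $f(\cdot+z_n,u,v)\to\tilde f(\cdot,u,v)$ and $g(\cdot+z_n,u,v)\to\tilde g$ by Arzel\`a--Ascoli, using uniform continuity in $z$ and Lipschitz continuity in $(u,v)$. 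The limit is an entire solution with $\beta\le u_\infty\le 1$, $0\le v_\infty\le\overline V$, and it satisfies $v_\infty(0,0)=0$ and $u_\infty(0,0)\le 1-\alpha$.

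The key step is to show $v_\infty\equiv 0$. The function $v_\infty$ solves the linear equation $\partial_t v_\infty=\partial_{xx}v_\infty + c(t,x)v_\infty$ with bounded coefficient $c=\tilde g(x-c_1t,u_\infty,v_\infty)$. It is nonnegative and vanishes at the interior point $(0,0)$, so the strong maximum principle gives $v_\infty\equiv0$ for all $t\le 0$. This is enough: on $t\le0$ the limit $u_\infty$ solves $\partial_t u=d\partial_{xx}u+u\tilde f(x-c_1t,u,0)$, so I want Lemma \ref{Lem-liou} to apply on this backward half. Its proof only uses the infimum over the domain together with a further translation, and since a backward-in-time half-space is preserved under the relevant shifts, a minimizing sequence can be taken with times $\le 0$; translating by those times keeps the limiting problem entire backward. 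Alternatively, and more simply, forward uniqueness extends $v_\infty\equiv0$ to all of $\R^2$, because $v\equiv0$ solves the $v$-equation for any $u$ and the Lipschitz system is well posed forward from $t=-T$ for every $T$. Either way I get an entire solution, and $\tilde f$ inherits $\inf_z\tilde f(z,u,0)>0$ for $u<1$ from Assumption \ref{Ass-fg}(iii). Lemma \ref{Lem-liou} then yields $u_\infty\equiv1$, which contradicts $u_\infty(0,0)\le 1-\alpha$.

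I expect the main obstacle to be justifying compactness uniformly, in particular the convergence of the translated nonlinearities in the moving variable and the legitimacy of treating the limit as an entire solution. The former is handled by Assumption \ref{Ass-fg}(i) together with a diagonal argument. For the latter, $t_n\to\infty$ is exactly what makes the limit defined for all times, and forward uniqueness settles whether $v_\infty$ vanishes globally.
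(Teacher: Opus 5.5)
Your proposal is correct and follows the paper's proof essentially step by step. The common steps are the contradiction sequence with $t_n\to+\infty$, translation and parabolic compactness to an entire solution, $\tilde v(0,0)=0$ forcing $\tilde v\equiv 0$, and then Lemma \ref{Lem-liou} giving $\tilde u\equiv 1$, which contradicts $\tilde u(0,0)\le 1-\alpha$. Your observation that the strong maximum principle only gives $\tilde v\equiv 0$ for $t\le 0$, with forward uniqueness of the bounded linear $\tilde v$-equation extending this to all times, supplies a detail the paper leaves implicit.
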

\begin{proof}
Argue by contradiction and suppose that there exist  sequences $\{x_n\}_{n\geq 1} \subset \R $,  $\{t_n\}_{n\geq 1} \subset \R$ with  $t_n \to +\infty$ as $n \to +\infty$,     and some constant  $\alpha_0>0 $  such that 
\begin{equation} \label{contra1}
1-u(t_n, x_n) > \alpha_0 + n  v(t_n, x_n), \;\;\;\; \forall n\geq 1.
\end{equation}
Set
\begin{equation*}
u_n(t,x) := u(t+t_n, x+x_n) \text{ and } v_n(t,x) := v(t+t_n, x+x_n). 
\end{equation*}
Note that $(u_n, v_n) (t,x)$ satisfies 
\begin{equation*}
\begin{cases}
\partial_t u_n= d \partial_{xx} u_n +   u_n f \big( x+x_n -c_1(t+t_n), u_n, v_n \big), & \forall t>-t_n, x\in \R, \\
\partial_t v_n =  \partial_{xx} v_n   + v_n  g\big(  x+x_n -c_1(t+t_n), u_n,   v_n  \big), & \forall t>-t_n, x\in \R.
\end{cases}
\end{equation*}
By parabolic estimates and Assumption \ref{Ass-fg}\textit{(i)}, up to a subsequence, we have 
\begin{align*}
\lim_{n\to +\infty} u_n(t,x) = \tilde{u}(t,x) \quad \text{ locally uniformly in }  (t,x) \in \R^2, \\
\lim_{n\to +\infty} v_n(t,x) = \tilde{v}(t,x) \quad  \text{ locally uniformly in }  (t,x) \in \R^2,
\end{align*}
and 
\begin{align*}
\lim_{n\to +\infty} f \big( x+x_n -c_1(t+t_n), u_n, v_n \big) = \tilde{f} \big( x-c_1t, \tilde{u}, \tilde{v} \big) \quad \text{ locally uniformly in }  (t,x) \in \R^2, \\
\lim_{n\to +\infty} g \big( x+x_n -c_1(t+t_n), u_n, v_n \big) = \tilde{g} \big( x-c_1t, \tilde{u}, \tilde{v} \big) \quad \text{ locally uniformly in }  (t,x) \in \R^2,
\end{align*}
The parabolic estimates ensure that    $(\tilde{u}, \tilde{v}) (t,x)$  satisfies  for all $(t,x) \in \R^2$,
\begin{equation*}
\begin{cases}
\partial_t \tilde{u}= d \partial_{xx} \tilde{u}  +  \tilde{u}  \tilde{f} \big( x-c_1t, \tilde{u}, \tilde{v} \big),\\
\partial_t \tilde{v}=  \partial_{xx} \tilde{v} + \tilde{v} \tilde{g} \big( x-c_1t, \tilde{u}, \tilde{v} \big).  \\
\end{cases}
\end{equation*}
Letting $n \to +\infty $ in  \eqref{contra1} and using the boundedness of $u$,  we  obtain
$\tilde{v}(0, 0) = 0$. The  strong maximum principle   applied to  the $\tilde{v}$-equation  implies that $\tilde{v}\equiv 0$.  Thus, $\tilde{u}$ satisfies 
\begin{equation*}
\partial_t \tilde{u}= d \partial_{xx} \tilde{u}  + \tilde{u} \tilde{f} \big( x-c_1t, \tilde{u}, 0 \big) ,     \quad \forall (t,x ) \in \R^2.
\end{equation*}
Moreover,  Lemma \ref{Lem-bound} gives $\tilde{u} \geq \beta >0$.   
Then  the Liouville-type  Lemma \ref{Lem-liou}  gives  $\tilde{u} \equiv 1$.     However, \eqref{contra1} yields 
$1- \tilde{u}(0, 0) \geq \alpha_0 >0$.
This contradiction completes  the proof.

\end{proof}

We now prove Theorem \ref{Thm1}  using  Lemma \ref{Lem-estimate} and Theorem \ref{Thm-KPP}.

\begin{proof}[Proof of Theorem \ref{Thm1}]
\textbf{Upper estimate:}
 We  first  show the upper estimate of the spreading speed of $v$.  Since  $u\in [\beta, 1]$,  the solution $v$ satisfies the following differential inequality
\begin{equation*}
\partial_t  v \leq \partial_{xx} v + v g(x-c_1t, 1, v) \quad \forall t>0, x\in \R.
\end{equation*}
Note that  the reaction term   $vg(\cdot, 1, v)$ satisfies the  KPP condition.
The comparison principle, together with Theorem \ref{Thm-KPP} and Remark \ref{Rem-kpp},  yields 
 \begin{equation*}
\lim_{t\to +\infty} \sup_{ x\geq ct} v(t, x) =0,  \quad \forall c> c^*_v.
 \end{equation*}

\vspace{1em}
\noindent \textbf{Lower estimate: }
Let us  prove  the lower estimate of spreading speed of $v$.    From Assumption \ref{Ass-fg} and Lemma \ref{Lem-bound}, there exists a constant $L>0$ such that   
$$g(z,u,v)  \geq  g(z,1,0) - L(1-u) -Lv, \quad \forall z\in \R,  \;\; (u, v) \in [\beta, 1] \times [0, \overline{V}]. $$
 Lemma \ref{Lem-estimate} implies that  for each  $\alpha >0$ small  enough, there exist  some $M_{\alpha}>0$ and $T_{\alpha}>0$ such that $v$ satisfies 
\begin{equation*}
\partial_{t} v \geq \partial_{xx} v+ v \bigg( g(x-c_1t ,1,0) -L \alpha - LM_\alpha v -Lv\bigg),  \;\; \forall t\geq T_{\alpha},   x\in \R.
\end{equation*}
We denote  
$$r_{\alpha} (x- c_1 t):=  g(x-c_1 t, 1,0) - L \alpha   \;\; \text{ and }  \;\;    h_{\alpha}: = L M_{\alpha}   +L.$$
Let $w_\alpha (t,x)$  solve
\begin{equation} \label{subv}
\partial_{t} w_\alpha= \partial_{xx}w_\alpha+ w_\alpha \bigg( r_{\alpha} \big(x-c_1 (t+T_{\alpha}) \big) - h_\alpha w_\alpha \bigg),  \;\; \forall t>0,  x\in \R, 
\end{equation}
equipped with  initial data $w_\alpha (0, x)$, where   $w_\alpha (0, \cdot)$   is nonnegative, nontrivial,  continuous and compactly supported on  $\R$.   By Lemma \ref{Lem-bound} and the strong maximum principle for the $v$-component of the equation  \eqref{Pb},   we have $v(t,x)>0$ for all $t>0$  and $x\in \R$.   We can choose $w_\alpha (0, x)$ small enough such that $w_\alpha(0, \cdot) \leq v(T_\alpha, \cdot)$. The comparison principle implies that $w_{\alpha}(t,x)\leq v(t+T_{\alpha}, x)$ for all $t\geq 0$ and $x\in \R$.

Now we introduce an eigenvalue problem. Define 
\begin{equation*}
r^{\pm}_\alpha :=   r_{\alpha} (\pm \infty) = g(\pm \infty, 1,0) -L\alpha. 
\end{equation*}
Let $\Lambda_{1, \alpha} $ be the generalized principal eigenvalue of following eigenvalue problem:
\begin{equation*}
\phi_{yy}+  r_{\alpha} (y)  \phi = \Lambda \phi,  \quad \forall y\in \R.
\end{equation*}
Equivalently,
\begin{equation*}
\Lambda_{1, \alpha}  =  \inf\left\{ \Lambda \in \R: \;\;  \exists \psi \in C^2_{loc} (\R), \psi>0,  \psi_{yy} +  r_\alpha (y)  \psi \leq  \Lambda \psi \text{ in } \R  \right\}.
\end{equation*}
 Let us choose $\alpha>0$ small enough such that  $r^{\pm}_{\alpha}>0$ and $\inf_{y\in \R} r_ \alpha (y) >0$. 
 For each sufficiently small $\alpha >0$,     applying Theorem  \ref{Thm-KPP} and Remark \ref{Rem-kpp} to \eqref{subv} with \(g(\cdot)\) replaced by \(r_\alpha(\cdot)\), we obtain that 
\begin{equation*}
c^*_\alpha= \begin{cases}
2\sqrt{r_{\alpha}^+}, & \text{ if } c_1 \leq 2 \sqrt{r_\alpha^+}, \\
c_1,   & \text{ if } 2\sqrt{r_{\alpha}^+} \leq c_1 \leq  2 \sqrt{\Lambda_{1,\alpha} }, \\
\frac{c_1}{2} - \sqrt{\Lambda_{1,\alpha}- r_{\alpha}^{-} }  + \frac{r_{\alpha}^{-}}{\frac{c_1}{2} - \sqrt{\Lambda_{1,\alpha}- r_{\alpha}^{-} }  },   \quad  & \text{ if }   2 \sqrt{\Lambda_{1,\alpha} }  \leq  c_1 \leq 2  \left(  \sqrt{r_{\alpha}^-} + \sqrt{\Lambda_{1,\alpha}-r_{\alpha}^{-}} \right),\\
2\sqrt{r_{\alpha}^-},    & \text{ if } c_1  \geq   2 \left(  \sqrt{r_{\alpha}^-} + \sqrt{\Lambda_{1,\alpha}-r_{\alpha}^{-}} \right).
\end{cases}
\end{equation*}
By the  comparison principle for \eqref{subv},  we obtain that for each given $\alpha>0$, 
\begin{equation}\label{lim}
\liminf_{t\to +\infty} \inf_{ 0 \leq x\leq ct} v(t+T_\alpha, x) \geq  \liminf_{t\to +\infty} \inf_{ 0 \leq x\leq ct} w_\alpha  (t , x) >0,  \quad \forall c\in (0, c^*_\alpha).
\end{equation}
The time shift  $T_{\alpha}$   can be removed   since  the  arbitrariness of $c\in (0, c^*_{\alpha})$.
Thus, for all $\alpha>0$ sufficiently small,  we have 
\begin{equation}\label{lim2}
\liminf_{t\to +\infty} \inf_{ 0 \leq x\leq ct} v(t, x)  >0,  \quad \forall c\in (0, c^*_\alpha).
\end{equation}

\vspace{1em}
Finally, let $\alpha\to 0^+$. Note that  $r_\alpha^\pm$ is continuous with respect to $\alpha \geq 0$ and $r_\alpha^\pm \to r^\pm$ as $\alpha \to 0^+$.  Here $r^{\pm}$ is defined in \eqref{rrr}.    From the definition of generalized principal eigenvalue, we have 
 $$   |\Lambda_{1, \alpha} - \Lambda_{1}  |  \leq \| r_{\alpha}(\cdot) -r(\cdot) \|_{L^\infty(\R)} =L \alpha \to 0   \;\;   \text{ as } \alpha \to 0^+, $$
where $r(\cdot):= g(\cdot, 1,0)$ and $\Lambda_1$ is defined in \eqref{ev}.
Since \(c_\alpha^*\to c_v^*\) as \(\alpha\to0^+\), for every \(c'\in(0,c_v^*)\) there exists \(\alpha'>0\) sufficiently small such that \(c'<c_{\alpha'}^*\).
   Hence, it follows from \eqref{lim2}  that we have  
\begin{equation*}
\liminf_{t\to +\infty} \inf_{ 0 \leq x\leq c t} v(t, x) >0,  \quad \forall c\in (0, c^*_v).
\end{equation*}
This completes the proof.
\end{proof}

\section{Asymptotic behavior and application}
\subsection{Proof of Corollaries \ref{Cor1} and \ref{Cor2}}
As a consequence of Theorem \ref{Thm1}, we can first establish the large-time behavior of solutions at the leading edge.

\begin{proof}[Proof of Corollary \ref{Cor1}]
By Theorem \ref{Thm1}, it  suffices to prove that
\begin{equation*}
\lim_{t\to +\infty} \sup_{ x\geq (c_v^* + \eta) t}  \left|  u(t,x) -1 \right | =0,  \quad  \forall \eta >0.
\end{equation*}
By  contradiction,  for each  $\eta >0$,   we suppose  that there exist sequences $\{ t_n\}_{n\geq 1}$ and $\{ x_n\}_{n\geq 1}$ with $x_n \geq (c_v^* +\eta ) t_n$ and $t_n \to +\infty$ as $n \to+\infty$  such that 
\begin{equation}\label{uless1}
 \limsup_{n\to +\infty}  u(t_n,x_n) <1. 
\end{equation}
  Denote $(u_n, v_n) (t,x) := (u, v) (t+t_n, x+x_n)$. By standard parabolic estimates, 
  we have 
  \begin{equation*}
  \lim_{n\to +\infty} (u_n, v_n)(t,x) = (\tilde{u}, \tilde{v})(t,x) \quad \text{ locally uniformly in }  (t,x) \in \R^2, 
  \end{equation*}
  and 
\begin{equation*}
\begin{cases}
\partial_t \tilde{u}= d \partial_{xx} \tilde{u}  +  \tilde{u}  \tilde{f} \big( x-c_1t, \tilde{u}, \tilde{v} \big),\\
\partial_t \tilde{v}=  \partial_{xx} \tilde{v} + \tilde{v} \tilde{g} \big( x-c_1t, \tilde{u}, \tilde{v} \big).  \\
\end{cases}
\end{equation*}
From the definition of spreading speed,  we have $\tilde{v} (0, 0)=0$. By the strong maximum principle, we obtain $\tilde{v} \equiv 0$. Then $\tilde{ u}$ satisfies 
\begin{equation*}
\partial_t \tilde{u}= d \partial_{xx} \tilde{u} + \tilde{u} \tilde{f} (x-c_1t, \tilde{u}, 0), \quad \forall (t,x) \in \R^2.
\end{equation*}
Note that $\tilde{u} \geq \beta $ from Lemma \ref{Lem-bound}.
By the Liouville-type Lemma \ref{Lem-liou}, we obtain $\tilde u\equiv 1$, which contradicts   
  \eqref{uless1}. The proof is complete.

\end{proof}

Next we prove Corollary \ref{Cor2}. The proof is close to  \cite[Theorem 2.6]{ducrot2019spreading} and \cite[Theorem 4]{LamLee2024pp}. 

\begin{proof}[Proof of Corollary \ref{Cor2}]
Let us first prove \eqref{22}.   By changing the coordinate $x\mapsto -x$, from Theorem \ref{Thm1}, we know that there exists $c_l>0$ such that
\begin{equation*}
\liminf_{t\to +\infty} \inf_{ -(c_l-\eta) t   \leq   x \leq  0 }  v(t,x) >0,  \quad \forall \eta >0.
\end{equation*}
Choose $\delta\in (0, c_l)$. Then for all sufficiently  small  $\eta>0$, there exist   $T=T(\eta, \delta) >0$  and $\varepsilon_0>0$ such that 
\begin{equation*}
v(t,x) \geq \varepsilon_0     \quad \text{ for all } t\geq T   \text{ and }    - \delta t  \leq x\leq (c_v^* - \frac{\eta}{2}) t.
\end{equation*}
If  \eqref{22} fails,   then  there exist  $\varepsilon_1>0$  and  a sequence $\{( t_k, x_k)\}_{k \geq 1} $ with 
$0 \leq x_k   \leq ( c_v^*- \eta) t_k$ and $t_k \to +\infty$ as $k \to +\infty$ such that 
\begin{equation}\label{contr-uv}
 \left| u(t_k, x_k)-u^*_{-}  \right| + \left| v(t_k, x_k) - v^*_{-}\right|   \geq \varepsilon_1   \quad \forall k\geq 1.
\end{equation}
 Denote $(u_k, v_k) (t,x) := (u, v) (t+t_k, x+x_k)$. 
 Then, up to a subsequence,  we have 
 \begin{equation*}
 v_k(t, x)\geq \varepsilon_0,   \qquad      \forall (t,x )\in  \Omega_k,
 \end{equation*}
 where 
 \begin{equation*}
 \Omega_k: = \{  (t,x): t+t_k \geq T,  \;\; - \delta (t+ t_k)  \leq  x+x_k\leq (c_v^* - \eta/2) (t+t_k)  \}.
 \end{equation*}
 For every compact set \(K\subset\mathbb R^2\), we have \(K\subset\Omega_k\) for all sufficiently large \(k\).
From  Lemma \ref{Lem-bound}, we have 
\begin{equation}\label{uvbbb}
 0<\beta \leq  u_k(t,x) \leq 1   \text{ and }    \varepsilon_0  \leq   v_k(t,x) \leq \overline{V},    \quad \forall (t,x) \in \Omega_k,  
\end{equation}
where $\beta$ and $\varepsilon_0>0$ are independent of $k \geq 1$.
 By  parabolic estimates,   possibly along a subsequence,  we have 
  \begin{equation*}
  \lim_{k\to +\infty} (u_k, v_k)(t,x) = (\tilde{u}, \tilde{v})(t,x) \quad \text{ locally uniformly in }  (t,x) \in \R^2. 
  \end{equation*}
  Since $c_1\geq  2\sqrt{r^+}$,  we have $c_v^* \leq c_1$. Then we  obtain
    \[
    x_k-c_1t_k\le (c_v^*-\eta-c_1)t_k\le -\eta t_k\to-\infty  \quad \text{ as  } k \to \infty.
    \]
 Hence, 
  \begin{equation*}
  \begin{aligned}
   \lim_{k\to +\infty } f(x+x_k- c_1(t+t_k), u_k, v_k) = f( -\infty, \tilde{u}, \tilde{v})= f_{-}(\tilde{u}, \tilde{v}) ~~  \text{ locally uniformly in } (t,x) \in \R^2,\\
    \lim_{k\to +\infty } g(x+x_k- c_1(t+t_k), u_k, v_k) =g( -\infty, \tilde{u}, \tilde{v}) =g_{-}(\tilde{u}, \tilde{v})  ~~ \text{ locally uniformly in } (t,x) \in \R^2.\\
  \end{aligned}
  \end{equation*}
  Note that  $(\tilde{u}, \tilde{v})$ satisfies
\begin{equation*}
\begin{cases}
\partial_t \tilde{u}= d \partial_{xx} \tilde{u}  +  \tilde{u} f_{-} \left(   \tilde{u},  \tilde{v}\right),   & \forall (t,x) \in \R^2, \\
\partial_t \tilde{v}=  \partial_{xx} \tilde{v} + \tilde{v} g_{-} \left(   \tilde{u},  \tilde{v}\right),  & \forall (t,x) \in \R^2. \\
\end{cases}
\end{equation*}
The estimate  \eqref{uvbbb} yields 
\begin{equation*}
\beta \leq  \inf_{(t,x) \in \R^2} \tilde{u} \leq \sup_{(t,x) \in \R^2}  \tilde{u}(t,x)  \leq 1 \text{  and  }    \varepsilon_0 \leq  \inf_{(t,x) \in \R^2} \tilde{v} \leq \sup_{(t,x) \in \R^2}  \tilde{v}(t,x)  \leq \overline{V}.
\end{equation*}  
Moreover,  \eqref{contr-uv}  gives
\begin{equation}\label{contra22}
\left| \tilde{u}(0, 0)-u^*_{-}  \right| + \left| \tilde{v}(0, 0) - v^*_{-}\right|   >\varepsilon_1. 
\end{equation}
Since $\tilde{v} >0$, the strong maximum principle applied to  $\tilde{u}-$ equation implies $\tilde{u}<1$.  
Using  $d=1$, Assumption \ref{Ass-LF} and  Claim 7.1 in \cite{ducrot2019spreading}, we obtain
\begin{equation}\label{eq-lya}
 (\tilde{u}, \tilde{v}) (t,x) \equiv  (u^*_{-}, v^*_{-}).  
\end{equation}
This  contradicts  \eqref{contra22}   and   completes the proof of  \eqref{22}.

The proofs of \eqref{33} and \eqref{44} are very similar.  The only difference is the choice of the  sequences $\{(t_k, x_k)\}$ and hence the limiting system satisfied by $(\tilde{u}, \tilde{v})$.     
In the proof of  \eqref{33}, the sequence satisfies
\(x_k-c_1t_k\to-\infty\), and hence the limiting system is governed by \((f_-,g_-)\).
In the proof of \eqref{44}, the sequence satisfies
\(x_k-c_1t_k\to+\infty\), hence the limiting system is governed by  \((f_+,g_+)\).
The same Lyapunov argument then yields the desired contradiction.
The proof is complete.

\end{proof}

\subsection{The Lotka-Volterra prey-predator model}
We now illustrate the result with the following Lotka–Volterra prey–predator system:
\begin{equation}\label{Pb-ex}
\begin{cases}
\partial_t u=d\partial_{xx}u+au(1-u-pv), & t>0,\ x\in\mathbb R,\\
\partial_t v=\partial_{xx}v+v(b(x-c_1t)u-1-v), & t>0,\ x\in\mathbb R,\\
u(0,x)=u_0(x),\quad v(0,x)=v_0(x), & x\in\mathbb R,
\end{cases}
\end{equation}
where \(u\) and \(v\) denote the densities of the prey and the predator, respectively. The constant \(d>0\) is the diffusivity of the prey, \(a>0\) is the intrinsic growth rate of the prey, and \(p>0\) measures the predation rate. The function \(b(x-c_1t)\) describes the conversion rate of consumed prey into predator biomass. When \(b\) is monotone, the spreading speed of  \eqref{Pb-ex}  was studied in \cite{LamLee2024pp}.

\begin{example}
\rm 
Let \(d,a,p\) be positive constants and let \(b:\mathbb R\to (0,\infty)\) be bounded and uniformly continuous. Set
\[
b_{\sup}:=\sup_{z\in\mathbb R}b(z),\qquad r(z):=b(z)-1.
\]
Assume that the limits 
$$r^\pm:=\lim_{z\to\pm\infty}r(z)$$
exist and that $\inf_{z\in\mathbb R}r(z)>0$.
We require 
\[
\beta_0:=1-p(b_{\sup}-1)>0.
\]
Let
\[
\overline{V}:=b_{\sup}-1.
\]
Then
\[
\sup_{z\in\mathbb R}g(z,1,\overline{V})
=
\sup_{z\in\mathbb R}\{b(z)-1-\overline{V}\}\le 0.
\]
Moreover, choosing \(0<\delta<\beta_0\), we obtain
\[
\inf_{\substack{z\in\mathbb R\\0\le u\le \delta}}
f(z,u,\overline{V})
=
a\inf_{0\le u\le \delta}\{1-u-p\overline{V}\}>0.
\]
Hence Assumption 2.3 is satisfied.

Let the initial data \(u_0,v_0\in C_b(\mathbb R)\) satisfy
\[
0<\inf_{x\in\mathbb R}u_0(x)\le u_0(x)\le 1,\qquad
0\le v_0(x)\le \overline{V},\qquad v_0\not\equiv 0,
\]
and assume that \(v_0\) is compactly supported. Then Assumptions 2.1, 2.3 and 2.4 are satisfied. Therefore Theorem 2.7 applies to \eqref{Pb-ex}, and the spreading speed of the predator is given by the formula in Theorem 2.7 with
\[
g(z,1,0)=b(z)-1=r(z).
\]

In particular, if  $ r^+ < \Lambda_1,$
then the locking phenomenon occurs for
\[
2\sqrt{r^+}<c_1<2\sqrt{\Lambda_1}.
\]
If
\[
2\sqrt{\Lambda_1}
<
2(\sqrt{r^-}+\sqrt{\Lambda_1-r^-}\;),
\]
then the nonlocal pulling phenomenon occurs for
\[
2\sqrt{\Lambda_1}
< c_1 <  2( \sqrt{r^-} + \sqrt{\Lambda_1- r^-} \; ).
\]
Furthermore, let
\begin{equation*} 
b_\pm:=b(\pm\infty),\qquad
u_\pm^*:=\frac{p+1}{pb_\pm+1},\qquad
v_\pm^*:=\frac{b_\pm-1}{pb_\pm+1}.
\end{equation*}
Then \((u_\pm^*,v_\pm^*)\) are the unique positive equilibria of the limiting Lotka--Volterra systems
\[
\begin{cases}
u_t=au(1-u-pv),\\
v_t=v(b_\pm u-1-v).
\end{cases}
\]
For \(i\in\{+,-\}\), define
\[
\Phi_i(u,v)
:=
b_i\int_{u_i^*}^{u}\frac{\xi-u_i^*}{\xi}\,d\xi
+
ap\int_{v_i^*}^{v}\frac{\eta-v_i^*}{\eta}\,d\eta,
\qquad u>0,\ v>0,
\]
where \(b_+=b(+\infty)\) and \(b_-=b(-\infty)\). Then \(\Phi_i\) is strictly convex and attains its minimum at \((u_i^*,v_i^*)\). A direct computation shows that 
\[
\begin{aligned}
(uf_i(u,v),vg_i(u,v))\cdot\nabla\Phi_i(u,v)
&=
b_i(u-u_i^*)f_i(u,v)
+
ap(v-v_i^*)g_i(u,v)\\
&=
-ab_i(u-u_i^*)^2-ap(v-v_i^*)^2\\
&\le 0,
\end{aligned}
\]
where
\[
f_i(u,v)=a(1-u-pv),\qquad g_i(u,v)=b_i u-1-v.
\]
Thus the limiting ODE systems admit strict Lyapunov functions.

Consequently, the convergence conclusions in Corollary 2.13 apply to \eqref{Pb-ex}. Moreover, for this particular Lotka--Volterra model, the restriction \(d=1\) is not needed. Indeed, the Lyapunov function is separable:
\[
\Phi_i(u,v)=\Psi_{1,i}(u)+\Psi_{2,i}(v).
\]
The same argument used  in the proof of Lemma 4.1 in \cite{ducrot2021asymptotic}  can be applied to  prove that the limiting entire solutions are  equal to the coexistence states  for every $d>0$.
\end{example}

\section*{Acknowledgements}
This work  is partially supported by NSFC (No. 12401225, No. 12471150) and  the Strategic Priority Research Program of CAS (No. XDB0900100). 

\begin{small}
	\bibliography{jzc}
		\bibliographystyle{abbrv}
\end{small}



\end{document}